\begin{document}

\title{A random model for the Paley graph}

\author{\myname}
\address{\myaddress}
\email{\myemail}

\begin{abstract}
For a prime $p$ we define the Paley graph to be the graph with the set of vertices $\mathbb{Z}/p\mathbb{Z}$, and with edges connecting vertices whose sum is a quadratic residue. Paley graphs are notoriously difficult to study, particularly finding bounds for their clique numbers. For this reason, it is desirable to have a random model. A well known result of Graham and Ringrose shows that the clique number of the Paley graph is $\Omega(\log p\log\log\log p)$ (even $\Omega(\log p\log\log p)$, under the generalized Riemann hypothesis) for infinitely many primes $p$ -- a behaviour not detected by the random Cayley graph which is hence deficient as a random model for for the Paley graph. In this paper we give a new probabilistic model which incorporates some multiplicative structure and as a result captures the Graham-Ringrose phenomenon. We prove that if we sample such a random graph independently for every prime, then almost surely (i) for infinitely many primes $p$ the clique number is $\Omega(\log p\log\log p)$, whilst (ii) for almost all primes the clique number is $(2+o(1))\log p$. \end{abstract}

\maketitle


\section{Introduction}\label{S:introduction}

Let $N$ be a prime and $R\subset \mathbb{Z}/N\mathbb{Z}$ be the set of quadratic residues. We define the so-called \emph{Paley sum graph} $\Gamma_R$ to be the graph with the set of vertices $\mathbb{Z}/N\mathbb{Z}$, and edges connecting vertices whose sum is in $R$. It is a well known open problem to find good asymptotics for the clique number $\omega(\Gamma_R)$ of this graph, that is the size of the largest complete subgraph.

Cohen~\cite{cohen} proved the lower bound $(1/2+o(1))\log N$ (all logarithms in this paper will be with base $2$). On the other hand, using basic Fourier analysis and standard estimates for Gauss sums, one can easily prove the upper bound $\sqrt{N}$. There are only minor improvements of this bound. For example, it can be shown \cite{sanderspaley} that for primes of the form $N = m^2+1$ for integer $m>2$, the clique number is at most $m-1$ (unfortunately, it is not even known that there are infinitely many primes of this form).

It is widely believed that the set of quadratic residues should have properties similar to a random subset of density $1/2$, which leads to an obvious question of finding the clique number of \emph{the random Cayley sum graph} $\Gamma_A$. As the notation suggests, $\Gamma_A$ is obtained by first choosing a random subset $A\subset \mathbb{Z}/N\mathbb{Z}$ by putting each element in it independently with probability $1/2$, and then joining vertices $x,y\in \mathbb{Z}/N\mathbb{Z}$ if and only if $x+y\in A$. Recently, Green and Morris \cite{green-morris} proved that with high probability $\omega(\Gamma_A)=(2+o(1))\log N$ which suggests that the clique number of the Paley sum graph might also be close to this value.

However, it is known that the clique number of the Paley sum graph is a little bit bigger than $2\log N$ for infinitely many primes $N$. Indeed, Graham and Ringrose \cite{graham-ringrose} proved that for infinitely many primes $N$ the lowest quadratic nonresidue $q$ is $\Omega(\log N\log\log\log N)$, that is at least $c\log N\log\log\log N$ for some constant $c>0$. Obviously, for these primes the set $\{1,2,\dots, q/2\}$ forms a large clique in $\Gamma_R$. Moreover, Montgomery \cite[page 122]{montgomery} proved that this result can, under generalized Riemann hypothesis, be improved to $\Omega(\log N\log\log N)$.

On the other hand, it is easily seen from the method used by Green and Morris that
$$\mathbb{P}(\omega(\Gamma_A)>10\log N)\leq 1/N^{2}.$$
It follows, by Borel-Cantelli lemma, that if we sample a random Cayley sum graph for each prime $N$, the clique number of only boundedly many of them would be greater than $10\log N$, which is in contrast to the result of Graham and Ringrose.

In this paper we introduce a different random graph model for the Paley graph. We show that this model usually gives the same clique number as in the random Cayley sum graphs, but also has the phenomena present in the result of Graham and Ringrose.

To motivate the construction, notice that the multiplicative structure present in the set of quadratic residues makes it relatively easy to have a large clique. For example, if we know that all the primes up to, say, $100$ are quadratic residues, then all the numbers $1,\dots,100$ are also quadratic residues, and hence $\{1,\dots,50\}$ spans a clique. 

This suggests the following model.

\begin{definition}
  Let $Q\geq 1$ be an integer. We define a random function $f\colon \mathbb{Z}/N\mathbb{Z}\to \{-1, 1\}$ in the following way. For every prime $p\in [1, Q]$ (here we embed $\mathbb{Z}/N\mathbb{Z}$ inside $\mathbb{Z}$ in an obvious way), we set $f(p)$ to be uniform $\pm 1$ random variable, and we make all these random variables independent. Next, we extend $f$ to be completely multiplicative on $[1, Q]$. Finally, for each $x\notin [1, Q]$ we set $f(x)$ to be again uniform $\pm 1$ random variable, independently of all other choices. We will say that $f$ is a \emph{$Q$-multiplicative random function}. Let $\Gamma_f$ be the random graph with vertex set $\mathbb{Z}/N\mathbb{Z}$ and edges connecting $x\neq y$ if and only if $f(x+y)=1$. We will say that $\Gamma_f$ is generated by $f$.
\end{definition}

Of course, the intuition here is that $f$ is a random model for the quadratic character $\left(\frac{\cdot}{N}\right)$ and thus the set $\{x\in\mathbb{Z}/n\mathbb{Z}\colon f(x)=1\}$ is a random model for the set of the quadratic residues $R$. In Section~\ref{S:independence} we will sketch why the decision to take values $f(p)$ (for small primes $p$) independently was sensible.

The majority of the paper will be devoted to proving that the clique number in our random model is with high probability of the same size as in the random Cayley graph.

\begin{theorem}\label{T:main}
    There exists a positive constant $c<1/2$, such that for $Q = c\log N \log\log N$ the following holds. Let $\epsilon>0$ and $f$ be the $Q$-multiplicative random function. Then
    $$\mathbb{P}((2-\epsilon)\log N\leq \omega(\Gamma_f)\leq (2+\epsilon)\log N)=1-o(1).$$    
\end{theorem}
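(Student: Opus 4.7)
The plan is to establish the two bounds separately, both resting on the following structural observation: conditionally on the values $\{f(p):p\leq Q\text{ prime}\}$, the function $f$ is determined on the interval $[1,Q]$ by multiplicativity, while the remaining values $\{f(x):x\in\mathbb{Z}/N\mathbb{Z}\setminus[1,Q]\}$ are iid uniform $\pm 1$, independent of the conditioning. Hence all dependencies inside $\Gamma_f$ are confined to the small window $[1,Q]$, which has size $Q=c\log N\log\log N=o((\log N)^2)$, and edges corresponding to sums avoiding $[1,Q]$ behave exactly as in a random Cayley sum graph.

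For the upper bound, set $k=\lceil(2+\epsilon)\log N\rceil$ and for a candidate $k$-set $K\subset\mathbb{Z}/N\mathbb{Z}$ let $S(K)=\{x+y\bmod N:x,y\in K,\,x\neq y\}$. Decompose $S(K)=S_1(K)\sqcup S_2(K)$ with $S_1(K)=S(K)\cap[1,Q]$, so $|S_1(K)|\leq Q$. For $K$ to be a clique we need $f\equiv 1$ on $S(K)$; conditioning on $f|_{[1,Q]}$ and using independence off $[1,Q]$ gives
\[
\mathbb{P}(K\text{ is a clique})\leq 2^{-|S_2(K)|}\leq 2^{Q}\cdot 2^{-|S(K)|}.
\]
Summing over all $k$-sets $K$ and invoking a Green-Morris-type estimate on $\sum_{|K|=k}2^{-|S(K)|}$ (which treats non-Sidon sets by a careful stratification according to $|S(K)|$, or equivalently their additive energy) yields a total bound of the form $2^{Q-\alpha\epsilon(\log N)^2}=o(1)$ for some absolute $\alpha>0$, since $Q=o((\log N)^2)$.

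For the lower bound, set $k=\lfloor(2-\epsilon)\log N\rfloor$ and restrict to the vertex subset $V=\{Q+1,Q+2,\ldots,\lfloor(N-1)/2\rfloor\}$ of size $(1/2-o(1))N$. For any distinct $x,y\in V$ we have $x+y\in(2Q,N)$, so $x+y\notin[1,Q]\pmod N$; consequently the induced subgraph $\Gamma_f[V]$ has each edge determined by an iid uniform $\pm 1$ value of $f$, and distributionally mimics a Cayley-type sum graph on $\sim N/2$ vertices. The second moment argument of Green-Morris then applies essentially verbatim to produce, with probability $1-o(1)$, a clique of size $(2-\epsilon)\log N-O(1)$ inside $V$, giving the required lower bound on $\omega(\Gamma_f)$.

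The main obstacle is the Green-Morris-type sumset count needed for the upper bound: the trivial inequality $|S(K)|\geq k-1$ is vastly too weak, and one must stratify $k$-sets according to their additive energy (their number of additive quadruples $a+b=c+d$) in order to extract the saving $\sum_{|K|=k}2^{-|S(K)|}\leq 2^{-\Omega(\epsilon(\log N)^2)}$. The multiplicative structure of $f$ introduces no further technical difficulty in either bound, since the $2^Q$ loss from the dependencies on $[1,Q]$ is comfortably swallowed by the Green-Morris saving; the lower bound is essentially a direct adaptation of their second moment argument, made possible by choosing $V$ so that no pairwise sum lands in the bad interval.
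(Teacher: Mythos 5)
Your lower bound is essentially the paper's own argument (the paper restricts to $[N/4,N/2]$, you to $[Q+1,(N-1)/2]$, so that all pairwise sums avoid $[1,Q]$ and a second moment computation applies; note the second moment lower bound is not literally in Green--Morris, and the paper writes it out). The upper bound, however, has a genuine gap. Your key claim is that $\sum_{|K|=k}2^{-|S(K)|}\leq 2^{-\Omega(\epsilon(\log N)^2)}$, so that the loss $2^{Q}$ incurred by throwing away $S(K)\cap[1,Q]$ is absorbed. That estimate is false: the sum is only polynomially small in $N$. Already the $\sim N$ intervals of length $k$ contribute about $N\cdot 2^{-2k}=N^{-3-2\epsilon}$, so the sum cannot be anywhere near $2^{-\Omega((\log N)^2)}$; and the best one can extract from Theorem~\ref{T:Skm} is $\sum_{m\geq k-1}|S_k^m|2^{-m}\leq\sum_{m\geq k-1}2^{-\epsilon^3m}\approx N^{-2(1+o(1))\epsilon^3}$. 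Against a polynomial saving, the factor $2^{Q}=N^{c\log\log N}$ is superpolynomial, so your final bound $2^{Q}\sum_{K}2^{-|S(K)|}$ tends to infinity rather than to $0$.

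Moreover, the failure is not an artifact of the crude factor $2^{Q}$: no union bound over all $k$-sets can work in this model. With probability $2^{-\pi(Q)}\geq N^{-O(1)}$ one has $f\equiv 1$ on all of $[1,Q]$ (by complete multiplicativity it suffices that $f(p)=1$ for the primes $p\leq Q$), and then every $k$-subset of $[1,Q/2]$ spans a clique; since $\binom{Q/2}{k}\geq (c'\log\log N)^{(2+\epsilon)\log N}$ is superpolynomial in $N$, the expected number of such cliques diverges. This is exactly why the paper proves the trichotomy (Proposition~\ref{P:trichotomy}): sets whose restricted sumset meets $[1,Q]$ in at most a $\delta$-fraction, or which contain a large subset whose sums avoid $[1,Q]$, are handled by the union bound together with Theorem~\ref{T:Skm}, but sets containing dense pieces $C,D$ of intervals with $C+D\subset[-Q,2Q]$ are handled by a completely different, Fourier-analytic argument: $f(C+D)=1$ forces $\sup_\theta|\widehat{g}(\theta)|\geq|C|^{1/2}|D|^{1/2}$ for the restriction $g$ of $f$ to $[-Q,2Q]$, and a fourth-moment bound on $\widehat{g}$ --- whose proof uses the multiplicative structure of $f$ on $[1,Q]$ to count the quadruples with nonvanishing expectation (products that are squares) --- shows that such a large Fourier coefficient is unlikely. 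So your closing remark that the multiplicative structure creates no additional difficulty in the upper bound is precisely where the argument breaks down; handling those ``type 3'' sets is the new content of the paper's proof, and your proposal is missing an idea to deal with them.
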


We prove the upper and lower bounds from this theorem in Sections~\ref{S:upper} and \ref{S:lower}, respectively.

Let $R$ be a subset of the primes. We define its relative density in primes to be $\lim_{M\to \infty}\frac{|R\cap [1,M]|}{\pi(M)}$ (if the limit exists), where $\pi(M)$ denotes, as usual, the number of primes up to $M$. As an easy consequence of the previous theorem we will prove the following, which shows that although there will be infinitely many $N$ for which the clique number is $\Omega(\log N\log\log N)$, for most $N$ it will be about $2\log N$.

\begin{theorem}\label{T:cliqueborelcantelli}
 Let $\epsilon>0$ and $c>0$ be a constant for which Theorem~\ref{T:main} holds. For each prime $N$ sample a random graph $\Gamma_f^{(N)}$, independently for each $N$. Then, almost surely, the clique number $\omega(\Gamma_f^{(N)})$ will be at least $Q/2$ for infinitely many primes $N$. However, for $N$ lying in a set of relative density in primes equal to $1$, the clique number $\omega(\Gamma_f^{(N)})$ would be between $(2-\epsilon)\log N$ and $(2+\epsilon)\log N$.
\end{theorem}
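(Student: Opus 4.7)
My plan is to prove the two parts separately, in both cases exploiting the fact that by construction the random graphs $\Gamma_f^{(N)}$ are mutually independent across the primes $N$.

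For the first part, let $A_N$ be the event that $f^{(N)}(p)=1$ for every prime $p\leq Q(N)$, where $Q(N)=c\log N\log\log N$. On $A_N$, complete multiplicativity forces $f^{(N)}(n)=1$ for every integer $n\in[1,Q(N)]$; hence for any two distinct $x,y\in\{1,\ldots,\lceil Q/2\rceil\}$ the sum $x+y$ lies in $[1,Q]$ and satisfies $f^{(N)}(x+y)=1$, producing a clique of size $\lceil Q/2\rceil\geq Q/2$ in $\Gamma_f^{(N)}$. The events $A_N$ are mutually independent and $\mathbb{P}(A_N)=2^{-\pi(Q(N))}$, where $\pi$ denotes the prime counting function. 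Since $\ln Q(N)\sim(\ln 2)\log\log N$, the prime number theorem gives $\pi(Q(N))\sim(c/\ln 2)\log N$, and hence $\mathbb{P}(A_N)=N^{-c/\ln 2+o(1)}$. The hypothesis $c<1/2<\ln 2$ comfortably forces the exponent to satisfy $c/\ln 2<1$, so $\sum_{N\text{ prime}}\mathbb{P}(A_N)=\infty$, and the second Borel--Cantelli lemma gives $\mathbb{P}(A_N\text{ i.o.})=1$.

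For the second part, let $X_N$ be the indicator of the event that $\omega(\Gamma_f^{(N)})\notin[(2-\epsilon)\log N,(2+\epsilon)\log N]$. Theorem~\ref{T:main} yields $p_N:=\mathbb{E}[X_N]=o(1)$, and the $X_N$ are independent $\{0,1\}$-valued. Setting $Y_M=\sum_{N\leq M,\,N\text{ prime}}X_N$, independence gives $\operatorname{Var}(Y_M)\leq\mathbb{E}[Y_M]=o(\pi(M))$, so Chebyshev's inequality yields $\mathbb{P}(Y_M\geq\delta\pi(M))=o(1/\pi(M))$ for every fixed $\delta>0$. Applied along a geometric subsequence $M_k=2^k$ these probabilities are summable, so by the first Borel--Cantelli lemma $Y_{M_k}\leq\delta\pi(M_k)$ eventually almost surely; monotonicity of both $Y_M$ and $\pi(M)$ interpolates the bound (with a harmless loss in $\delta$) to all $M$, and letting $\delta\to 0$ along a countable sequence proves that the ``bad'' primes have relative density zero almost surely.

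The one point deserving genuine care is the asymptotic estimate $\pi(Q(N))\sim(c/\ln 2)\log N$: the whole argument hinges on $c/\ln 2<1$, which is precisely why the hypothesis $c<1/2$ from Theorem~\ref{T:main} gives a divergent Borel--Cantelli sum over primes. The second part is essentially a routine weak law of large numbers for independent bounded variables with mean tending to zero, promoted to an almost-sure statement by the standard subsequence trick.
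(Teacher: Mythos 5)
Your proof is correct, and the first half is essentially identical to the paper's: you bound $\mathbb{P}(\omega(\Gamma_f^{(N)})\geq Q/2)$ below by the probability that $f(p)=1$ for all primes $p\leq Q$, use complete multiplicativity to get the clique $\{1,\dots,\lceil Q/2\rceil\}$, and apply the second Borel--Cantelli lemma over the independent samples. Your PNT computation is in fact slightly more careful than the paper's, which simply asserts $2^{-\pi(Q)}\geq 1/N$ from $Q\leq \log N\log\log N$; your observation that what is really needed is $c/\ln 2<1$, guaranteed by $c<1/2<\ln 2$, makes the divergence of $\sum_N\mathbb{P}(A_N)$ explicit. The second half is where you diverge in method: the paper invokes a generalized second Borel--Cantelli lemma (quoted from Chung) stating that for independent events with divergent probability sum, $\sum_{n\leq m}1_{A_n}\big/\sum_{n\leq m}\mathbb{P}(A_n)\to 1$ almost surely, and combines it with $\sum_{N\leq M}\mathbb{P}(A_N)=o(\pi(M))$ coming from Theorem~\ref{T:main}. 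You instead prove the needed strong-law statement by hand: independence gives $\operatorname{Var}(Y_M)\leq\mathbb{E}[Y_M]=o(\pi(M))$, Chebyshev gives $\mathbb{P}(Y_M\geq\delta\pi(M))=o(1/\pi(M))$, and first Borel--Cantelli along $M_k=2^k$ plus monotonicity upgrades this to an almost-sure density-zero statement for the bad primes. This buys self-containedness (no external lemma), and it also quietly avoids the divergence hypothesis of the cited lemma, which the paper's application does not address (if $\sum_N\mathbb{P}(A_N)$ converges, one would instead fall back on the first Borel--Cantelli lemma); the paper's route, in exchange, is a two-line deduction once the lemma is granted. Both arguments are sound.
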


We give the proof of this theorem in the last section.

\section{Notation}\label{S:notation}

Although most of the notation was implicitly defined in the introduction, we include it here for the reader's convenience. For sets $X,Y\subset \mathbb{Z}/N\mathbb{Z}$ we will denote their sumset by $X+Y$, so $X+Y=\{x+y\colon x\in X, y\in Y\}$. We will also consider their \emph{restriced sumset} $X\widehat{+}Y=\{x+y\colon x\in X, y\in Y, x\neq y\}$. We will use standard $O$-notation: if $f,g$ are two functions on positive integers we will write $f(n)=O(g(n))$ and $g(n)=\Omega(f(n))$ if there exists $C>0$ such that $|f(n)|\leq C|g(n)|$ for all large enough $n$. We will write $f(n)=\Theta(g(n))$ if there exist constants $c,C>0$ such that $c|g(n)|\leq |f(n)|\leq C|g(n)|$ for all large enough $n$. Additionally, $f(n)=o(g(n))$ means that $f(n)/g(n)$ tends to $0$ as $n$ tends to infinity. $[A,B]$ will, depending on the context, denote the set of all integers $n$ such that $A\leq n\leq B$, and its image in $\mathbb{Z}/N\mathbb{Z}$. Finally, for an integrable function $g\colon\mathbb{Z} \to \mathbb{R}$ and $\theta \in [0,1]$, we define the corresponding Fourier coefficient by
$$\hat{g}(\theta) = \sum_ng(n)e(-\theta n).$$
Here, as usual, $e(\psi) = e^{2\pi i \psi}$.

\section{Independence}\label{S:independence}

The purpose of this section is to somehow formalize the intuition that $(\frac{q}{p})$ are \emph{independent} for different primes $q$, which was the motivation for our model. We note that most of the results from this section are already present in the literature (see e.g.\ \cite[Proposition~9.1]{soundgranville}), although possibly in a slightly different form.

Let $x$ be an integer, and $y$ an integer to be chosen later (one should think of $y$ as being substantially smaller than $x$). For each prime $p\leq x$, let $v_{(p)}$ be a vector of all $(\frac{q}{p})$ where $q$ runs over primes less than $y$ (all instances of $p$ and $q$ in this section will denote primes). We define the counting function $N$ by setting, for each $s\in\{-1,1\}^{\pi(y)}$,
$$N(s) = \#\{p\leq x\colon v_{(p)}=s\}.$$
Notice that this can also be expressed as
\begin{equation}\label{E:prikazNs}
N(s) = 2^{-\pi(y)}\sum_{p\leq x}\prod_{q\leq y}\left(1+\textstyle{(\frac{q}{p})}s_q\right).
\end{equation}

Our aim is to prove that $\mathrm{Var}_sN(s)$ is small. Before doing that, we would like to mention that if $(v_{(p)})_{p\leq x}$ were independent random vectors, each uniformly distributed on $\{-1,1\}^{\pi(y)}$ (and hence each with independent coordinates), we would have
\begin{equation}\label{E:randomvar}
\mathbb{E}_v\mathrm{Var}_sN(s) = \Theta(2^{-\pi(y)}x/\log x). 
\end{equation}
For comparison, this is much smaller than the theoretical maximum $O(2^{-\pi(y)}x^2/(\log x)^2)$.

Let $P_y=\prod_{q\leq y}q$. Notice that we can identify the characters on $\{-1,1\}^{\pi(y)}$ with the set of divisors of $P_y$ by assigning to each divisor $m$ of $P_y$ the character $s\mapsto \prod_{q | m}s_q$.

By Parseval's formula we have
\begin{equation}\label{E:varns}
\mathrm{Var}_sN(s) = \mathbb{E}_sN(s)^2 - (\mathbb{E}_sN(s))^2 = \sum_{1\neq m | P_y}|\widehat{N}(m)|^2. 
\end{equation}
Now, for fixed $m\neq 1$ from \eqref{E:prikazNs} we have
\begin{align}
\widehat{N}(m) &= \mathbb{E}_sN(s)\prod_{q | m}s_q = 2^{-\pi(y)}\sum_{p\leq x} \prod_{q | m}\mathbb{E}_{s_q}(s_q+ \textstyle{(\frac{q}{p})}) \displaystyle{\prod_{q\nmid m}} \mathbb{E}_{s_q}(1+\textstyle{(\frac{q}{p})}s_q)\nonumber\\
&= 2^{-\pi(y)}\sum_{p\leq x}\prod_{q | m} \textstyle{(\frac{q}{p})}.\label{E:hatNm}
\end{align}

Assume for a moment that $m$ is even. Notice that by the quadratic reciprocity and the supplementary formula for $(\frac{2}{\cdot})$ we have
$$\prod_{q | m} \textstyle{(\frac{q}{p})} = (-1)^{\frac{p^2-1}{8}} \displaystyle{\prod_{2\neq q | m}} \textstyle{(\frac{p}{q})}(-1)^{\frac{p-1}2} (-1)^{\frac{q-1}2}.$$
The conclusion is that there exists a primitive Dirichlet character $\chi_m$ of modulus at most $4m$ such that the left-hand side is equal to $\chi_m(p)$ or $-\chi_m(p)$ with, of course, the same choice of $\pm$ sign for all $p$. We may assume that it is the former, since we will only be interested in bounding the absolute value of \eqref{E:hatNm}. The same conclusion follows for $m$ odd.

The following proposition is quite standard and will be our main tool in bounding $|\widehat{N}(m)|$. A proof can be found in e.g.\ \cite[Theorem 13.7]{montgomery-vaughan}.

\begin{proposition}
Let $\chi$ be a nonprincipal Dirichlet character of modulus $r$. Then, if Generalized Riemann Hypothesis holds,
$$\left|\sum_{p\leq x}\chi(p)\right| \lesssim x^{1/2}\log rx.$$
\end{proposition}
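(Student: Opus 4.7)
The plan is to follow the classical explicit-formula approach. I would first pass from the prime sum to the von Mangoldt weighted sum $\psi(x,\chi) = \sum_{n\leq x}\Lambda(n)\chi(n)$, since the latter interacts cleanly with $L(s,\chi)$ through the identity $-L'(s,\chi)/L(s,\chi) = \sum_n \Lambda(n)\chi(n)n^{-s}$. The starting point is Perron's formula,
$$\psi(x,\chi) = -\frac{1}{2\pi i}\int_{c-iT}^{c+iT}\frac{L'(s,\chi)}{L(s,\chi)}\frac{x^s}{s}\,ds + O\!\left(\frac{x(\log x)^2}{T}\right),$$
with $c = 1 + 1/\log x$ and a truncation height $T$ to be chosen later.

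The second step is to shift the contour past the critical line, say to $\mathrm{Re}(s) = -1/2$. Since $\chi$ is nonprincipal, $L(s,\chi)$ is entire, so the only poles of the integrand inside the rectangle come from $s=0$ and from the nontrivial zeros of $L(s,\chi)$. Under GRH every such zero $\rho$ lies on $\mathrm{Re}(s) = 1/2$, and the standard density estimate gives $O(\log(r(|T|+2)))$ zeros in any unit vertical interval at height $T$. Collecting residues and bounding the horizontal and far vertical segments via the usual estimates on $L'/L$ yields the explicit formula
$$\psi(x,\chi) = -\sum_{|\gamma|\leq T}\frac{x^\rho}{\rho} + O\!\left(\frac{x(\log rx)^2}{T} + \log rx\right).$$
Under GRH each term in the sum is bounded by $x^{1/2}/|\rho|$, and summing using the density estimate gives $O(x^{1/2}(\log rT)^2)$. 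Taking $T = x$ balances the two error contributions and produces $|\psi(x,\chi)| \lesssim x^{1/2}(\log rx)^2$.

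Finally, the contribution of prime powers $p^k$ with $k\geq 2$ to $\psi(x,\chi)$ is trivially $O(x^{1/2})$, so the Chebyshev-type sum $\theta(x,\chi) = \sum_{p\leq x}\chi(p)\log p$ obeys the same bound. Partial summation,
$$\sum_{p\leq x}\chi(p) = \frac{\theta(x,\chi)}{\log x} + \int_2^x \frac{\theta(t,\chi)}{t(\log t)^2}\,dt,$$
saves one logarithm and produces the claimed estimate $O(x^{1/2}\log rx)$ in the regime where the bound is nontrivial (roughly $r \leq x^{O(1)}$).

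The main technical obstacle is analytic rather than structural: justifying the contour shift requires controlling $L'/L$ on horizontal segments just outside the critical strip, which in turn relies on the Hadamard product representation of $L(s,\chi)$ and the functional equation (with a small wrinkle depending on the parity of $\chi$ at $s=0$). All of this is entirely standard (see \cite[Theorem~13.7]{montgomery-vaughan}), so from the perspective of the present paper the proposition is simply invoked as a black box.
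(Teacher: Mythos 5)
The paper gives no proof of this proposition at all: it is invoked as a standard fact with a citation to \cite[Theorem~13.7]{montgomery-vaughan}, and your explicit-formula argument (Perron, contour shift, GRH zeros on the critical line, partial summation from $\psi(x,\chi)$ back to the prime sum) is exactly the standard proof behind that citation, so your route and the paper's are the same. One small accounting point is worth fixing: summing over zeros with the density $O(\log(r(|\gamma|+2)))$ per unit interval gives $\sum_{|\gamma|\leq T}1/|\rho| \ll (\log T)(\log rT)$, not $(\log rT)^2$, and using this sharper count with $T=x$ yields $\psi(x,\chi)\ll x^{1/2}(\log x)(\log rx)$ and hence $\sum_{p\leq x}\chi(p)\ll x^{1/2}\log rx$ uniformly in $r$, with no need for your restriction to $r\leq x^{O(1)}$ --- a restriction that would technically not cover the paper's application, where the modulus can be as large as $4P_y=e^{O(y)}$ with $y\asymp \log x\log\log x$ (though even your lossier bound would still suffice for the variance estimate there, which only needs to be sharp up to powers of $\log x$).
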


Using this and the well-known fact that $P_y = e^{O(y)}$, we get from \eqref{E:hatNm} that for $m\neq 1$
$$|\widehat{N}(m)| \lesssim 2^{-\pi(y)}x^{1/2}(\log x + y).$$
Plugging this into \eqref{E:varns} we get
$$\mathrm{Var}_sN(s) \lesssim 2^{-\pi(y)}x((\log x)^2 + y^2).$$
For $y=c\log x\log \log x$ this is comparable (within a power of $\log x$) to the situation one has in the random model described above (i.e.\ \eqref{E:randomvar}).

We can now easily recover the conditional result of Montgomery \cite{montgomery} mentioned in Section~\ref{S:introduction}.

\begin{proposition}
There exists a constant $c$ and infinitely many primes $p$ such that the smallest quadratic nonresidue modulo $p$ is at least $c\log p\log\log p$.
\end{proposition}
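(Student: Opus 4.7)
Fix the all-ones string $s_0 = (1,1,\dots,1) \in \{-1,1\}^{\pi(y)}$. Then $N(s_0)$ counts precisely those primes $p \leq x$ for which every prime $q \leq y$ is a quadratic residue modulo $p$; any such prime has smallest quadratic nonresidue strictly greater than $y$. The plan is therefore to choose $y$ as large as possible while still guaranteeing $N(s_0) \to \infty$ as $x \to \infty$.

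To estimate $N(s_0)$ I would use Fourier inversion on the group $\{-1,1\}^{\pi(y)}$, whose characters are identified with the divisors of $P_y$. This gives
$$N(s_0) = \sum_{m \mid P_y}\widehat{N}(m),$$
with $\widehat{N}(1) = 2^{-\pi(y)}\pi(x)$ serving as the main term. The remaining coefficients are controlled by \eqref{E:hatNm} combined with the discussion preceding the proposition: for each $m \neq 1$ the quantity $\prod_{q\mid m}\textstyle(\frac{q}{p})$ equals $\pm \chi_m(p)$ for a primitive Dirichlet character $\chi_m$ of conductor at most $4P_y = e^{O(y)}$, so the proposition (conditional on GRH) yields $|\widehat{N}(m)| \lesssim 2^{-\pi(y)} x^{1/2}(\log x + y)$. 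Summing over the $2^{\pi(y)}-1$ nontrivial divisors of $P_y$ and applying the triangle inequality,
$$N(s_0) \geq 2^{-\pi(y)}\pi(x) - x^{1/2}(\log x + y).$$

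Setting $y = c\log x \log\log x$ with $c < 1/2$ gives $\pi(y) = (c+o(1))\log x$ and hence $2^{-\pi(y)} = x^{-c+o(1)}$, so the main term is $\gtrsim x^{1-c}/\log x$ while the error is $O(x^{1/2}\log x \log\log x)$. Since $1-c > 1/2$, the main term dominates and $N(s_0) \to \infty$. Consequently, for every sufficiently large $x$ there is a prime $p \leq x$ whose smallest quadratic nonresidue exceeds $c\log x \log\log x \geq c\log p \log\log p$. To extract infinitely many distinct witnesses, I would observe that any fixed prime $p_0$ has a fixed smallest quadratic nonresidue and therefore ceases to contribute to $N(s_0)$ once $y(x)$ exceeds this value; the primes counted by $N(s_0)$ are thus eventually larger than any prescribed bound. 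The only delicate point is calibrating $c$ so that the polylogarithmic loss $\log x + y$ is absorbed by the main term, which is exactly what forces $c < 1/2$ and matches Montgomery's range.
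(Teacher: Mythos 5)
Your argument is correct and is essentially the paper's own: the same Fourier expansion of $N(s)$ over the characters indexed by divisors of $P_y$, the same GRH bound $|\widehat{N}(m)|\lesssim 2^{-\pi(y)}x^{1/2}(\log x+y)$ for $m\neq 1$, and the same choice $y\asymp \log x\log\log x$; the paper merely runs the endgame as a Parseval/variance contradiction on a dyadic window $(x,2x]$ instead of your direct inversion plus triangle inequality, and the two give the same admissible range of $y$. One minor calibration point: with the paper's base-$2$ logarithms one has $\pi(y)=(c/\ln 2+o(1))\log x$, so your computation in fact forces $c<(\ln 2)/2$ rather than $c<1/2$ --- immaterial here, since the proposition asserts only the existence of some constant $c$.
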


\begin{proof}
Let $y=c\log x \log \log x$, where $c$ is a constant to be chosen later. Obviously, it is enough to prove that for every large enough $x$ there exists a prime $p$ with desired properties between $x$ and $2x$. Moreover, by multiplicativity it is enough to prove that $(\frac{q}{p})=1$ for every prime $q \leq y$.

Using the same procedure as above, we can bound $\mathrm{Var}_sN'(s)$ where 
$$N'(s) = \#\{x < p\leq 2x\colon v_{(p)}=s\}.$$
We get
$$\mathrm{Var}_s N'(s)\lesssim 2^{-\pi(y)}x((\log x)^2 + y^2).$$
If, on the other hand, $N'((1,\dots,1))=0$, then
$$\mathrm{Var}_s N'(s) \gtrsim 2^{-3\pi(y)}x^2/(\log x)^2.$$
However, these two bounds are incompatible if $c$ is small enough.
\end{proof}

\section{Proof of the upper bound}\label{S:upper}

We now prove the upper bound from Theorem~\ref{T:main}. Throughout this section we will work with a fixed value of $\epsilon >0$, and consider the cardinality $k=(2+\epsilon)\log N$. Without loss of generality we may assume that $\epsilon$ is sufficiently small when needed.

We prove that with high probability there is no subset of $\mathbb{Z}/N\mathbb{Z}$ with $k$ elements which spans a clique in $\Gamma_f$. Obviously, this is equivalent to proving that
$$\mathbb{P}\left(\bigcup_{A\colon |A|=k}\{f(A\widehat{+}A)=1\}\right) = o(1).$$
It is evident from the definition of the function $f$, that our argument would split into two parts -- one dealing with $[1,Q]$ which is number-theoretical and the other dealing with its complement which is more additive-combinatorial. Roughly speaking, for each set $A$ of size $k$ we will decide, using the following proposition, into which of these two cases it falls.

\begin{proposition}[Trichotomy]\label{P:trichotomy}
	Let $L\geq 1$, $Q=L\log N$, $\delta>0$ and $A$ be a subset of $\mathbb{Z}/N\mathbb{Z}$ of size $k$.  Then $A$ must satisfy at least one the following possibilities:
	\begin{enumerate}
	\item (type 1 set) $|(A\widehat{+}A)\cap [1, Q]|\leq \delta|A\widehat{+}A|$.
	\item (type 2 set) There exists $A'\subset A$ such that $|A'|\geq (1-\delta)|A|$ and $(A'\widehat{+}A')\cap [1, Q]=\emptyset$.
	\item (type 3 set) There exist sets $C, D\subset A$, each contained in an interval of size $\log N$, such that $|C|, |D|\geq \frac{\delta^5}{6L^4}\log N$ and $C+D\subset [-Q,2Q]$.
	\end{enumerate}
\end{proposition}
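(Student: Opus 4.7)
Assume $A$ satisfies neither (type 1) nor (type 2); I aim to deduce (type 3). The failure of (type 2) implies that the graph $G$ on $A$ with edges $\{x,y\}$ satisfying $x+y\in[1,Q]$ has $\alpha(G)<(1-\delta)k$; the elementary bound $\mu(G)\geq\tau(G)/2$ then produces a matching $M\subset G$ with $|M|\geq\delta k/2$. I would split each edge $(x,y)\in M$, viewed as integers in $[0,N-1]$, into two cases: the \emph{small-sum case} $x+y\in[1,Q]$ (forcing $x,y\in[0,Q]$) and the \emph{wrap-around case} $x+y\in[N+1,N+Q]$; by pigeonhole at least $\delta k/4$ edges fall into one case.

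In the small-sum case, $|A\cap[0,Q]|\geq\delta k/2$. Partitioning $[0,Q]$ into $L$ consecutive intervals of length $\log N$ and applying pigeonhole yields an interval $I$ with $|A\cap I|\geq\delta k/(2L)\geq\delta^5\log N/(6L^4)$, where the last inequality is elementary for $\delta\leq 1$ and $L\geq 1$. Setting $C=D=A\cap I$ gives $C+D\subset I+I\subset[0,2Q]\subset[-Q,2Q]$, so (type 3) holds.

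The wrap-around case is the main obstacle, as the endpoints of such edges need not cluster into any short interval. Here the failure of (type 1) must be invoked: it gives $|A+A|\leq Q/\delta+|A|=O(Lk/\delta)$, so $A$ has doubling $K=O(L/\delta)$. Partitioning $\mathbb{Z}/N\mathbb{Z}$ into $M=\lceil N/\log N\rceil$ buckets of length $\log N$, the wrap-around matching pairs satisfy $i_x+i_y\equiv s\pmod M$ for some $s\in\{-1,\dots,L-1\}$; pigeonholing on $s$ isolates a single residue carrying $\gtrsim\delta k/L$ edges. However, the matching alone only yields $\sum_i\min(n_i,n_{(s-i)\bmod M})=\Omega(\delta k/L)$, which is linear in $k$ and too weak to force any single pair of buckets to be simultaneously dense. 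My plan is to use the small-doubling hypothesis---via, for instance, iterated Plünnecke--Ruzsa estimates on $jA-jA$ for small $j$---to constrain the bucket profile $(n_i)$ and locate two buckets $i,j$ with $i+j\equiv s\pmod M$ and both $n_i,n_j\geq\delta^5\log N/(6L^4)$. Making this last step precise is where the main work lies; the $\delta^5/L^4$ factor is consistent with roughly four Plünnecke applications (each costing a factor $L/\delta$) combined with one $\delta$ loss from the matching pigeonhole.
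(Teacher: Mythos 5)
Your reduction to the wrap-around case is fine as far as it goes: the failure of type 2 does give $\alpha(G)<(1-\delta)k$ for the graph with edges $\{x,y\}$, $x+y\in[1,Q]$, hence a matching of size $>\delta k/2$, and your treatment of the small-sum case (both endpoints in $[0,Q]$, pigeonhole into $\le L+1$ intervals of length $\log N$, take $C=D=A\cap I$) is correct for the proposition as literally stated. But the wrap-around case, which you yourself flag as ``where the main work lies,'' is precisely the content of the proposition, and your plan for it has a genuine gap. Small doubling of $A$ alone (which is all you propose to feed into iterated Pl\"unnecke--Ruzsa) cannot constrain the bucket profile: a set can have doubling $O(1)$ and still meet $k$ distinct length-$\log N$ intervals, one element each, so no pair of buckets is ever simultaneously dense. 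What must be exploited is the conjunction of small doubling with the fact that \emph{many restricted sums land in the short interval} $[1,Q]$, and you never articulate a mechanism that combines the two. The paper's mechanism is the missing idea: pick one point of $A$ from each of the $h$ intervals it meets, thin these to a $(Q+1)$-separated subfamily of size $\ge h/(2L+2)$, and observe that the translates $a'_i+\bigl((A\widehat{+}A)\cap[1,Q]\bigr)$ are pairwise disjoint and all contained in $A+A+A$, which by Pl\"unnecke--Ruzsa has size at most $K^3|A|$ with $K\approx L/\delta$. This bounds the number of hit intervals by $h\le 3K^3L/\delta$, after which one passes to the union $A'$ of ``good'' (dense) intervals: either $(A'\widehat{+}A')\cap[1,Q]=\emptyset$ (type 2) or two good intervals interact and give type 3 directly --- note the paper never needs your small-sum/wrap-around split at all. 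Without this (or an equivalent) bucket-count bound, your wrap-around case does not close; and even granting it, pigeonholing your matching over \emph{pairs} of the $h$ buckets would only yield density of order $\delta k/h^2\approx \delta^9\log N/L^8$, short of the stated $\tfrac{\delta^5}{6L^4}\log N$.

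Two smaller points. First, in your small-sum case you take $C=D$; the statement permits this, but the way type 3 is used later requires $C+D\subset A\widehat{+}A$ (so that $f(A\widehat{+}A)=1$ forces $g(C+D)=1$), which fails for the diagonal sums $c+c$ --- the paper deliberately produces \emph{disjoint} halves $C$ and $D$, and you should too. Second, your containment $C+D\subset[-Q,2Q]$ in the wrap-around case needs $2\log N\le Q$ (i.e.\ $L\ge 2$) to come out of the stated window; this is harmless in the intended application but worth stating.
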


In the proof of this proposition we will need the following well known result. For the proof see the book by Tao and Vu \cite{tao-vu}.

\begin{proposition}[Pl\"{u}nnecke-Ruzsa]\label{P:plunnecke-ruzsa}
	Let $A\subset \mathbb{Z}$ be a finite set such that $|A+A|\leq K|A|$ for some $K\geq 1$. Then for all nonnegative integers $m$ and $n$ we have
	$$|mA-nA|\leq K^{m+n}|A|.$$
\end{proposition}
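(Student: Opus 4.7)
The plan is to use Petridis's streamlined 2012 argument for the Plünnecke inequality and then to convert sums into differences via Ruzsa's triangle inequality.

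\textbf{Step 1 (Plünnecke via Petridis).} Among all nonempty $X\subseteq A$, let $Z$ be one that minimises the ratio $|X+A|/|X|$, and write $K':=|Z+A|/|Z|$; then $K'\le |A+A|/|A|\le K$, and by minimality $|W+A|\ge K'|W|$ for every nonempty $W\subseteq A$. I would prove, by induction on $|C|$, the Petridis inequality
$$|Z+A+C|\le K'|Z+C|\qquad\text{for every finite }C\subseteq\mathbb{Z}.$$
The base case $|C|=1$ is the definition of $K'$. For the inductive step, write $C=C'\cup\{c\}$ with $c\notin C'$ and set $T:=\{z\in Z:z+c\in Z+C'\}$. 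Inclusion--exclusion gives
$$|Z+A+C|=|Z+A+C'|+|Z+A|-|(Z+A+c)\cap(Z+A+C')|.$$
One checks directly that $T+A+c\subseteq (Z+A+c)\cap (Z+A+C')$: the first inclusion is $T\subseteq Z$, and for the second, if $t\in T$ then $t+c=z'+c''$ for some $z'\in Z$, $c''\in C'$, so $t+a+c=z'+a+c''\in Z+A+C'$. Hence the intersection has size at least $|T+A|\ge K'|T|$, by minimality of $Z$ applied to $T\subseteq A$. Combined with the inductive hypothesis and the identity $|Z+C|=|Z+C'|+|Z|-|T|$ (which follows from the same partition on the $C$-side), this yields $|Z+A+C|\le K'(|Z+C'|+|Z|-|T|)=K'|Z+C|$, closing the induction. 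Iterating with $C$ equal to successive sumsets $A,2A,\ldots,(n-1)A$ gives $|Z+nA|\le K'^{n}|Z|$, and a short Ruzsa covering argument (using that $A$ is covered by at most $K'$ translates of $Z-Z$) transfers this to $|nA|\le K^{n-1}|A|$ for every $n\ge 1$.

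\textbf{Step 2 (Ruzsa's triangle inequality and conclusion).} Ruzsa's triangle inequality $|X|\cdot|Y-W|\le|X-Y|\cdot|X-W|$ is proved in one line: fix a representation $d=y_d-w_d$ for each $d\in Y-W$, and observe that $(d,x)\mapsto(x-y_d,x-w_d)$ injects $(Y-W)\times X$ into $(X-Y)\times(X-W)$. Applying this with $X=-A$ (so that $|X-Y|=|A+Y|$ and $|X-W|=|A+W|$), $Y=nA$, and $W=mA$, then substituting the bound from Step 1,
$$|nA-mA|\le\frac{|A+nA|\cdot|A+mA|}{|A|}=\frac{|(n+1)A|\cdot|(m+1)A|}{|A|}\le\frac{K^{n}|A|\cdot K^{m}|A|}{|A|}=K^{n+m}|A|,$$
which is the claimed bound.

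\textbf{Main obstacle.} The delicate part is the inductive step of Petridis's lemma. The auxiliary set $T$ has to perform a double role: it must be defined so that $T+A+c$ fits inside both $Z+A+c$ and $Z+A+C'$ (in order to invoke minimality of $Z$), and simultaneously so that the cardinality identity $|Z+C|=|Z+C'|+|Z|-|T|$ holds on the nose. It is precisely this coincidence that forces the sum- and intersection-side bounds to interlock with the exact constant $K'$, giving a clean exponential bound $K'^n|Z|$ rather than the looser $K'^{n+O(1)}|Z|$ one would obtain from a naive union bound. Matching the exponent $K^{n+m}$ rather than $K^{n+m+O(1)}$ in the final step also relies on the sharp Plünnecke exponent $|kA|\le K^{k-1}|A|$, which is what this careful induction produces.
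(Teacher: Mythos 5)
The paper does not prove this proposition at all --- it simply cites Tao and Vu --- so you are supplying a proof where the authors give none. Your route (Petridis's induction to get $|Z+A+C|\le K'|Z+C|$, then Ruzsa's triangle inequality to pass to $mA-nA$) is the standard modern proof of this statement, and the two core ingredients are executed correctly: the identity $|Z+C|=|Z+C'|+|Z|-|T|$ is exact because $(Z+c)\cap(Z+C')=T+c$ by definition of $T$, the inclusion $T+A+c\subseteq(Z+A+c)\cap(Z+A+C')$ is right, and the injection proving the triangle inequality is right.

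There is, however, one genuine gap in the hand-off between your two steps. Because you apply the triangle inequality with $X=-A$, you need $|A+nA|=|(n+1)A|\le K^{n}|A|$, i.e.\ the sharp form $|kA|\le K^{k-1}|A|$; but what Petridis's lemma actually delivers is $|Z+kA|\le K'^{k}|Z|$, which (via $z_0+kA\subseteq Z+kA$) only gives $|kA|\le K^{k}|A|$. The ``short Ruzsa covering argument'' you invoke to upgrade this does not obviously produce the exponent $k-1$: covering $A$ by at most $K'$ translates of $Z-Z$ and then controlling $|Z-Z+(k-1)A|$ costs further factors of $K$, and with only $|kA|\le K^{k}|A|$ your final display yields $K^{m+n+2}|A|$, not $K^{m+n}|A|$. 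The repair is easy and standard: apply the triangle inequality with $X=-Z$ instead of $X=-A$. Then $|X-Y|=|Z+nA|\le K'^{n}|Z|$ and $|X-W|=|Z+mA|\le K'^{m}|Z|$ directly, so
$$|Z|\cdot|nA-mA|\le K'^{\,n+m}|Z|^{2},\qquad\text{hence}\qquad |nA-mA|\le K'^{\,n+m}|Z|\le K^{n+m}|A|,$$
with no covering step and no appeal to a sharp bound on $|kA|$. With that substitution your argument is complete (the cases $m=0$ or $n=0$, with $0A=\{0\}$, go through the same computation).
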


We will also need the following result proven by Schoen \cite{schoen} which shows that we can easily shift between sumsets and restricted sumsets.

\begin{lemma}\label{L:schoen}
	Let $B$ be a subset of $\mathbb{Z}/N\mathbb{Z}$ of size $l$. Then 
	$$|B+B|=(1+o_{l\to\infty}(1))|B\widehat{+}B|.$$
\end{lemma}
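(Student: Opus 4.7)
The plan is to start from the identity $B + B = (B \widehat{+} B) \cup \{2b : b \in B\}$, which is immediate from the definitions. Since $N$ is an odd prime, the map $b \mapsto 2b$ is a bijection on $\mathbb{Z}/N\mathbb{Z}$, so the doubled set has $l$ elements, yielding $|B+B| \leq |B \widehat{+} B| + l$. More precisely, writing $r_B(s) = \#\{(c,d) \in B^2 : c+d = s\}$ and $T = \{b \in B : r_B(2b) = 1\}$, the elements of $(B+B) \setminus (B\widehat{+}B)$ are precisely the doublings $\{2b : b \in T\}$, so $|B+B| - |B\widehat{+}B| = |T|$. It therefore suffices to prove $|T|/|B\widehat{+}B| \to 0$ as $l \to \infty$; note that the trivial bounds $|T| \leq l$ and $|B \widehat{+} B| \geq l - 1$ fall just short, so a finer analysis is genuinely needed.

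I would split into cases according to the doubling $K := |B+B|/l$. In the large-doubling regime $K \to \infty$, the bound $|B\widehat{+}B| \geq |B+B| - l = (K-1)l$ together with $|T| \leq l$ gives $|T|/|B\widehat{+}B| \leq 1/(K-1) \to 0$ at once, and we are done.

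The substantive case is bounded doubling, $K = O(1)$. Here Pl\"unnecke-Ruzsa (Proposition~\ref{P:plunnecke-ruzsa}) yields $|2B - B| \leq K^3 l$. For each $b \in T$ the shift $2b - B$ has size $l$ and meets $B$ only at $b$ itself, so each such $b$ contributes $l-1$ elements to the modestly-sized set $(2B-B)\setminus B$, which has cardinality at most $K^3 l$. A double-counting or Cauchy-Schwarz argument on these $|T|$ shifts, controlled by the additive energy $E(B)$, should yield a bound $|T| = O_K(1)$. Heuristically, $T$ captures the ``corners'' of $B$: for a $d$-dimensional generalized arithmetic progression, direct inspection gives $|T| = 2^d$, and Freiman's theorem embeds any set of doubling $K$ efficiently inside a GAP of rank $O_K(1)$. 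Since $|B\widehat{+}B| \geq l - O_K(1) \to \infty$, this completes the argument.

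The main obstacle is making the bounded-doubling case sharp and uniform in $K$. A naive Cauchy-Schwarz on the shifts $2b - B$ seems to yield only something of the form $|T| \lesssim K^{3/2} l$, which is useless. Either the structural force of Freiman's theorem (to reduce to the GAP case and count corners directly) or a more refined Pl\"unnecke-type estimate---as presumably employed by Schoen in~\cite{schoen}---will be required to push the argument through.
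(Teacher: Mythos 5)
Your reduction is fine as far as it goes: with $T=\{b\in B\colon r_B(2b)=1\}$ one indeed has $|B+B|-|B\widehat{+}B|=|T|$ (using that $N$ is odd), and the large-doubling regime $K=|B+B|/l\to\infty$ is handled by the trivial bounds. But note the paper offers no proof of this lemma at all --- it quotes Schoen's theorem --- so the bounded-doubling case, which you leave open, is the entire content, and neither of the completions you suggest can work. The target $|T|=O_K(1)$ is false: take $B=\{0,1,\dots,l-1\}\cup S$ where $S\subset[5l,6l]$ is a Behrend (3-AP-free) set of size $l\,e^{-c\sqrt{\log l}}$. Then $|B+B|\leq 6l+O(1)$, so the doubling is bounded, yet for every $s\in S$ the only representation of $2s$ is $s+s$ (sums involving the interval are smaller than $10l$, and a second representation inside $S$ would be a nontrivial 3-AP centred at $s$); hence $|T|\geq |S|=l^{1-o(1)}$, far from $O_K(1)$. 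The same example shows that ``reduce to the GAP case and count corners'' fails: Freiman (or rather Green--Ruzsa, since we are in $\mathbb{Z}/N\mathbb{Z}$) only places $B$ inside a GAP as a possibly sparse subset, and a subset can have vastly more non-midpoint elements than the $2^d$ corners of the ambient GAP.

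There is also a structural reason why no Cauchy--Schwarz/energy/Pl\"unnecke bookkeeping can close the gap. Observe that $T$ is exactly the set of elements of $B$ that are not midpoints of two distinct elements of $B$; in particular $T$ itself contains no nontrivial 3-term progression. Conversely, if there existed a 3-AP-free set $S\subseteq[1,L]$ with $|S|\geq\delta L$, then taking $B=S$ (embedded without wraparound in a large $\mathbb{Z}/N\mathbb{Z}$) gives $T=B$ and $|B+B|-|B\widehat{+}B|=|S|\geq\delta L\geq\tfrac{\delta}{2}|B+B|$, contradicting the lemma as $l\to\infty$. So the lemma implies Roth's theorem, and any proof must contain an ingredient of that strength. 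The correct way to finish your bounded-doubling case is: note $T$ is 3-AP-free; by Green--Ruzsa, $B$ (hence $T$) lies in a GAP of rank $O_K(1)$ and size $O_K(l)$, which maps into an interval of length $O_K(l)$ by a Freiman 2-isomorphism preserving midpoint relations; Roth's theorem then gives $|T|=o_K(l)$, and together with $|B\widehat{+}B|\geq l-1$ this yields $|B+B|=(1+o(1))|B\widehat{+}B|$. As written, your argument identifies the right dichotomy but leaves the essential case unproved and aims at a quantitatively false intermediate claim.
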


\begin{proof}[Proof of Proposition \ref{P:trichotomy}]
	Suppose $A$ is not a type 1 set. Then
	$$2\delta |A\widehat{+}A|\leq 2Q \leq L|A|,$$
	and so by Lemma~\ref{L:schoen} we have $|A+A|\leq K|A|$ for $K=L/\delta$.
	
	We now split $\mathbb{Z}/N\mathbb{Z}$ into disjoint intervals, each of length $\log N$ (except possibly one). First of all, we prove that only few of these intervals are hit by $A$. Denote this number by $h$ and choose points $a_1,\dots, a_h\in A$, one from each of these $h$ intervals. Notice that we can choose at least $\frac{h}{2L+2}$ of these points (we denote them by $a'_1,\dots, a'_l$) such that the distance between any two of them is at least $Q+1$. Indeed, any translate of $[-Q, Q]$ intersects at most $2L+2$ intervals, and hence eliminates at most this many points. Obviously, translates $a'_i+(A\hat{+}A)\cap [1, Q]$ for $i=1, \dots, l$ are all disjoint and hence
	$$|A + ((A\hat{+}A)\cap [1, Q])|\geq \frac{h}{2L+2}\cdot \delta |A\hat{+}A|\geq \frac{\delta h |A|}{3L}.$$
	On the other hand, using Proposition~\ref{P:plunnecke-ruzsa} (Pl\"{u}nnecke-Ruzsa) we get
	$$|A + ((A\hat{+}A)\cap [1, Q])|\leq |A+A+A|\leq K^3|A|.$$
	Conclusion is that $h\leq 3K^3L/\delta$.
	
	Let $\beta = \frac{\delta^2}{3K^3L}$. We call an interval \emph{good} if it contains at least $\beta\log N$ elements from $A$. Let $A'$ be the intersection of $A$ with the union of all good intervals. Obviously, 
	$$|A\setminus A'|\leq h\cdot \beta\log N\leq \delta |A|$$
	and hence $A$ is a type 2 set if $A'\widehat{+}A'$ is disjoint from $[1,Q]$. If, on the other hand, $(A'\widehat{+}A')\cap [1, Q]\neq \emptyset$, then there exist two good intervals $I_1$ and $I_2$ such that	
	$$((A\cap I_1) \widehat{+} (A\cap I_2))\cap [1, Q] \neq \emptyset.$$
	Let $C'=A\cap I_1$ and $D'=A\cap I_2$; by the previous line we have $C'\widehat{+}D'\subset [-Q,2Q]$, and so the only thing left to do to prove that $A$ is a type 3 set is to replace the restricted sumset by a genuine sumset. Without loss of generality, we may assume that $|C'|\leq |D'|$. Let $C\subset C'$ be a set of $|C'|/2$ elements, and let $D\subset D'$ be a set $|D'|/2$ elements, disjoint from $C$. Then $C+D\subset C'\widehat{+}D'$ and we can conlude that $A$ is a type 3 set.
\end{proof}

Our main tool for dealing with type 1 and type 2 sets is the following proposition, the proof of which occupies the majority of \cite{green-morris}.

\begin{theorem}\label{T:Skm}
    For every $m$ define
    $$S_k^m = \{A\subset\mathbb{Z}/N\mathbb{Z}\colon |A|=k\text{ and }|A\widehat{+}A|=m\}.$$
    Then
    $$|S_k^m|\leq 2^{(1-\epsilon^3)m}.$$
\end{theorem}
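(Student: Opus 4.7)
The plan is to encode each $A\in S_k^m$ using strictly fewer than $m$ bits, so that $|S_k^m|\leq 2^{(1-\epsilon^3)m}$ follows by counting codewords. Note first that the target is only nontrivial when $m$ is near its maximum $\binom{k}{2}\approx 2\log^2 N$, since the naive bound $\log|S_k^m|\leq\log\binom{N}{k}\approx (2+\epsilon)\log^2 N$ beats it otherwise; so the argument must exploit the collisions in $A+A$ that are forced when $m$ is smaller than $k^2/2$.

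The cleanest route I see is the hypergraph container method of Balogh--Morris--Samotij and Saxton--Thomason. After Lemma~\ref{L:schoen} replaces $\widehat{+}$ with the unrestricted sumset at the cost of a $1+o(1)$ factor in $m$, one constructs an auxiliary hypergraph $H$ on $\mathbb{Z}/N\mathbb{Z}$ whose edges encode the additive configurations (e.g.\ $4$-tuples $(x,y,z,w)$ with $x+y=z+w$, suitably weighted) that force $|A+A|\leq m$. A container lemma then places each $A$ in one of a small family of containers $\mathcal{C}$, each only slightly larger than $k$; counting $k$-subsets of each container and multiplying by $|\mathcal{C}|$ should yield the bound.

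A more hands-on alternative is an entropy/chain-rule argument. Order $A=\{a_1,\dots,a_k\}$ greedily and let $s_i$ denote the number of new elements of $A+A$ first created by $a_i$, so that $\sum_i s_i=(1+o(1))m$. One would then aim to show $H(a_i\mid a_1,\dots,a_{i-1})\leq(1-\epsilon^3)s_i+O(1)$ by arguing that once the previous elements are fixed and $s_i$ is prescribed, the locus of valid $a_i$'s is confined to a set of size roughly $2^{(1-\epsilon^3)s_i}$. The key additive-combinatorial input would be Pl\"{u}nnecke--Ruzsa (Proposition~\ref{P:plunnecke-ruzsa}) combined with the Cauchy--Schwarz lower bound $E(A)\geq k^4/|A+A|$ on the additive energy, which forces many repeated sums whenever $m\ll k^2$.

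The main obstacle is obtaining the saving as an explicit constant-order fraction $\epsilon^3$ of $m$, rather than an ineffective $o(m)$ loss. A naive Freiman--Ruzsa reduction gives only tower-type quantitative bounds and cannot track a clean $\epsilon^3$; the correct path must either run the chain-rule argument in carefully chosen small increments, or deploy the container method with tight quantitative parameters for the codegree function. This delicate bookkeeping of a constant-fraction entropy saving is presumably why Theorem~\ref{T:Skm} occupies the bulk of the Green--Morris paper.
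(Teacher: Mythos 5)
There is a genuine gap: what you have written is a research plan, not a proof. Note that the paper itself does not prove this theorem --- it imports it wholesale from Green and Morris \cite{green-morris} (with the intermediate range of $m$ coming from \cite{green-clique}), remarking that its proof occupies the majority of that paper. A blind attempt can therefore only succeed by actually carrying out an argument of that scale, and neither of your two proposed routes is executed. For the container route you never specify the hypergraph, verify the degree/codegree hypotheses of a container lemma, bound the number of containers, or show that counting $k$-subsets of the containers really yields the exponent $(1-\epsilon^3)m$ rather than an $o(m)$-type saving. For the entropy route, the inequality $H(a_i\mid a_1,\dots,a_{i-1})\leq(1-\epsilon^3)s_i+O(1)$ \emph{is} the content of the theorem; naming Pl\"unnecke--Ruzsa and the energy bound $E(A)\geq |A|^4/|A+A|$ as ``key inputs'' does not produce the constant-fraction saving, and your own closing paragraph concedes that this decisive quantitative step is missing.

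In addition, your opening reduction is backwards. Since $\log_2{N\choose k}\approx(2+\epsilon)\log^2 N$ while the target exponent is $(1-\epsilon^3)m$, the naive bound $|S_k^m|\leq{N\choose k}$ suffices only when $m\gtrsim\frac{2+\epsilon}{1-\epsilon^3}\log^2 N$, i.e.\ only when $m$ is within a factor $1+\Theta(\epsilon)$ of its maximum ${k\choose 2}\approx(2+2\epsilon)\log^2 N$. For all smaller $m$ the statement is nontrivial: for instance when $m=O(k)$ the target $2^{(1-\epsilon^3)m}=N^{O(1)}$ is minuscule compared with $N^k$, and the paper explicitly records that the hard cases are $m=O(k)$ and $m=\Omega(k^2)$. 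So the claim that the theorem ``is only nontrivial when $m$ is near its maximum'' misidentifies where the difficulty lies, and the sketch never engages with the small-sumset regime --- counting sets with small doubling with the correct constant in the exponent --- which is precisely the heart of \cite{green-morris}.
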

We note that in the proof of this theorem different strategies are used depending on the size of $m$. \cite{green-morris} deals with the cases $m=O(k)$ and $m=\Omega(k^2)$, whereas the claim for other values of $m$ was already proven in \cite{green-clique}.
 
We now shift the attention to our method for dealing with type 3 sets. The rough idea is to first prove that with high probability all of the Fourier coefficients of (the restriction of) function $f$ will be quite small. On the other hand, we will show that the existence of sets $C$ and $D$ as in the definition of type 3 sets, which additionally satisfy $f(C+D)=1$, implies the existence of a large Fourier coefficient, so we will be able to conclude that this is quite unlikely to happen.

We will work with the function $g\colon \mathbb{Z}\to \mathbb{R}$ defined by
$$g(x) = \begin{cases}
               f(x\ \mathrm{mod}\ N)               & \text{for } -Q\leq x\leq 2Q,\\
               0               & \text{otherwise}.
           \end{cases}$$
           
The following proposition covers the first part of the strategy outlined above, namely that it is unlikely that $g$ has a large Fourier coefficient.

\begin{proposition}\label{P:boundsforbigFourier}
For any $l\leq Q$ we have 
$$\mathbb{P}\left(\sup_{0\leq \theta\leq 1}|\widehat{g}(\theta)| \geq l\right) \leq Q^{4+o(1)}/l^5.$$
\end{proposition}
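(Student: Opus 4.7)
The plan is to combine a fourth-moment estimate at fixed frequency with a Lipschitz/net argument. The skeleton is: show that $\mathbb{E}|\widehat{g}(\theta)|^{4}\le Q^{2+o(1)}$ for every fixed $\theta\in[0,1]$, deduce by Markov that $\mathbb{P}(|\widehat{g}(\theta)|\ge l/2)\le Q^{2+o(1)}/l^{4}$, and then take a union bound over a net of size $O(Q^{2}/l)$ to control the supremum. The net size comes from the crude Lipschitz estimate $|\widehat{g}'(\theta)|\le 2\pi\sum_n|n||g(n)|=O(Q^{2})$; choosing a spacing of order $l/Q^{2}$ guarantees $|\widehat{g}(\theta)-\widehat{g}(\theta')|\le l/2$ across any net cell. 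The final bound is then
$$\mathbb{P}\!\left(\sup_{\theta}|\widehat{g}(\theta)|\ge l\right)\le \frac{Q^{2}}{l}\cdot\frac{Q^{2+o(1)}}{l^{4}}=\frac{Q^{4+o(1)}}{l^{5}},$$
so everything is reduced to the moment bound.

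To prove the moment bound I would split $g=g_{1}+g_{0}$, where $g_{1}$ is the restriction of $g$ to $[1,Q]$ and $g_{0}$ its restriction to $[-Q,0]\cup[Q+1,2Q]$. These pieces are \emph{independent}: $g_{1}$ only depends on the signs $\{f(p):p\le Q\text{ prime}\}$ through the imposed complete multiplicativity, while $g_{0}$ only depends on the independent uniform $\pm 1$ values of $f$ at arguments outside $[1,Q]$. Expanding,
$$\mathbb{E}|\widehat{g}(\theta)|^{4}=\sum_{n_{1},n_{2},n_{3},n_{4}\in[-Q,2Q]}\mathbb{E}[g(n_{1})g(n_{2})g(n_{3})g(n_{4})]\,e(-\theta(n_{1}-n_{2}+n_{3}-n_{4})),$$
and since the phases have modulus one, it suffices to count the $4$-tuples whose expectation is non-zero. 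Sorting these by the subset $R\subseteq\{1,2,3,4\}$ of indices $i$ for which $n_{i}$ lies in the support of $g_{0}$, the $g_{0}$-factor of the expectation vanishes unless the $n_{i}$ with $i\in R$ match up in pairs (forcing $|R|\in\{0,2,4\}$), while the $g_{1}$-factor equals $\mathbf{1}[\prod_{i\notin R}n_{i}\text{ is a square}]$ by multiplicativity and $\mathbb{E}f(p)=0$.

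Three short counts then finish the job. The case $|R|=4$ contributes $O(Q^{2})$ by the pairing constraint. The case $|R|=2$ contributes $O(Q)\cdot O(Q\log Q)=O(Q^{2}\log Q)$, where the second factor counts ordered pairs $(m,n)\in[1,Q]^{2}$ with $mn$ a square: parametrising $m=da^{2}$, $n=db^{2}$ for squarefree $d$ bounds this by $Q\sum_{d\le Q,\,\mu^{2}(d)=1}1/d\ll Q\log Q$. The case $|R|=0$ is similar: writing $n_{i}=d_{i}a_{i}^{2}$ with $d_{i}$ squarefree reduces the condition to $d_{1}d_{2}d_{3}d_{4}=\square$, and a standard sum-over-squarefrees bound produces $O(Q^{2}(\log Q)^{O(1)})$ quadruples. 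Adding the three contributions gives $\mathbb{E}|\widehat{g}(\theta)|^{4}\le Q^{2+o(1)}$ uniformly in $\theta$, and the proposition follows.

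The only substantive obstacle I foresee is the bookkeeping for the $|R|=0$ count, which is a divisor-type problem over squarefree integers; the polylogarithmic losses it incurs are absorbed harmlessly into the $o(1)$ in the exponent of $Q$, so one does not need to be sharp about them.
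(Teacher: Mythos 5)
Your proposal is correct and follows essentially the same route as the paper: a fourth-moment bound $\mathbb{E}|\widehat{g}(\theta)|^4\le Q^{2+o(1)}$ obtained by counting the quadruples with non-vanishing expectation (product of the entries in $[1,Q]$ a square, entries outside pairing up), then Markov at each point of a net of spacing $\asymp l/Q^2$ and a union bound. The only difference is cosmetic: you count the all-inside quadruples via the squarefree parametrisation $n_i=d_ia_i^2$ with $d_1d_2d_3d_4=\square$, while the paper fixes the square ($\le Q^2$ choices) and invokes the divisor bound; both give $Q^{2+o(1)}$.
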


\begin{proof}
	Let $\psi\in [0,1]$. We have
	\begin{align*}
		\mathbb{E}|\widehat{g}(\psi)|^4 &= \sum_{-Q\leq n_1,n_2,n_3,n_4\leq 2Q}\mathbb{E}g(n_1)g(n_2)g(n_3)g(n_4)\cdot e((n_1+n_2-n_3-n_4)\psi)\\
		&\leq \sum_{-Q\leq n_1,n_2,n_3,n_4\leq 2Q}\left|\mathbb{E}g(n_1)g(n_2)g(n_3)g(n_4)\right|
	\end{align*}
	Notice that the expectation appearing in the sum would be $0$ unless the product of those $n_i$s that take values inside $[1,Q]$ is a square, and no number outside this interval is equal to the odd number of $n_i$s. We will call such quadruples \emph{bad}. Obviously, there are $O(Q^2)$ bad quadruples for which all $n_i$ are outside $[1,Q]$. At the other extreme, consider bad quadruples for which all $n_i$ are inside $[1,Q]$. Their product is a square smaller than $Q^4$, which we can choose in $Q^2$ ways. Additionally, by the divisor bound we can choose four of its divisors (that is, $n_i$s) in at most $Q^{o(1)}$ ways, giving in total $Q^{2+o(1)}$ bad quadruples. The remaining case is when two of the $n_i$s are equal to a number outside $[1,Q]$, and the product of the remaining two is a square. In the same way as before, we can see that there are at most $Q^{2+o(1)}$ such bad quadruples. We can now conclude that 
	$$\mathbb{E}|\widehat{g}(\psi)|^4\leq Q^{2+o(1)}.$$
	By Markov's inequality this gives us
	\begin{equation}\label{E:Markovforghat}	
		\mathbb{P}\left(|\widehat{g}(\psi)| \geq l/2\right) \leq Q^{2+o(1)}/l^4.
	\end{equation}	
	Define
	$$\theta_j = jl/80Q^2,\quad\text{for }j=0,\dots,\lfloor 80Q^2/l\rfloor.$$
	For any $\theta\in [0,1]$ there is $j$ such that $|\theta - \theta_j| < l/80Q^2$. For such $j$ we have
	\begin{equation}\label{E:continuityofghat}
		|\widehat{g}(\theta_j) - \widehat{g}(\theta)|\leq \sum_{-Q\leq n\leq 2Q}|e((\theta_j-\theta)n) - 1| \leq l/2,
	\end{equation}
	where the last inequality follows from $|e(\alpha)-1|\leq 2\pi|\alpha|$ which holds for all real $\alpha$.
	
	From \eqref{E:Markovforghat}, \eqref{E:continuityofghat}, and the union bound we have
	$$\mathbb{P}\left(\sup_{0\leq \theta\leq 1}|\widehat{g}(\theta)| \geq l\right) \leq \mathbb{P}\left(\max_{j}|\widehat{g}(\theta_j)| \geq l/2\right) \leq Q^{4+o(1)}/l^5,$$
	and this is what we wanted to prove.
\end{proof}

The following proposition covers the second part of our strategy, namely the existence of a large Fourier coefficient. The proof is quite standard, but the brevity of the argument allows us to include it here for completeness.

\begin{proposition}\label{P:existsbigFourier}
  Let $C$ and $D$ be two sets of integers such that $g(C+D)=1$. Then 
    $$\sup_{0\leq \theta\leq 1}|\widehat{g}(\theta)|\geq |C|^{1/2}|D|^{1/2}.$$
\end{proposition}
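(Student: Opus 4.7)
The plan is to extract the bound from a Plancherel/Cauchy-Schwarz computation applied to the convolution counting representations of $C+D$. First, I would note that the hypothesis $g(C+D)=1$ means
\[
\sum_{c\in C,\,d\in D} g(c+d) \;=\; |C|\cdot|D|,
\]
since every pair $(c,d)$ contributes $g(c+d)=1$. The point is to rewrite this sum in terms of $\widehat{g}$ so that $\sup_\theta |\widehat{g}(\theta)|$ can be pulled out.

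Next, I would introduce the representation function $h(n)=\#\{(c,d)\in C\times D:c+d=n\}$, so that $\sum_n g(n)h(n)=|C|\cdot|D|$ and the Fourier transform factorises as $\widehat{h}(\theta)=\widehat{1_C}(\theta)\widehat{1_D}(\theta)$ with $\widehat{1_C}(\theta)=\sum_{c\in C}e(-\theta c)$, etc. Applying Parseval on $\mathbb{Z}$ (both $g$ and $h$ have finite support, so there are no convergence issues) gives
\[
|C|\cdot|D| \;=\; \sum_n g(n)h(n) \;=\; \int_0^1 \widehat{g}(\theta)\,\overline{\widehat{1_C}(\theta)\widehat{1_D}(\theta)}\,d\theta.
\]

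Taking absolute values, pulling out the supremum of $|\widehat{g}|$, and then applying Cauchy--Schwarz together with Parseval for $1_C$ and $1_D$ (i.e.\ $\int_0^1|\widehat{1_C}(\theta)|^2\,d\theta=|C|$ and likewise for $D$) yields
\[
|C|\cdot|D| \;\leq\; \sup_{0\leq\theta\leq 1}|\widehat{g}(\theta)|\cdot \left(\int_0^1 |\widehat{1_C}(\theta)|^2\,d\theta\right)^{1/2}\left(\int_0^1 |\widehat{1_D}(\theta)|^2\,d\theta\right)^{1/2} \;=\; \sup_\theta|\widehat{g}(\theta)|\cdot |C|^{1/2}|D|^{1/2}.
\]
Dividing through by $|C|^{1/2}|D|^{1/2}$ gives the claim. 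There is no serious obstacle here; the only thing to be careful about is the Fourier normalisation used in the paper (the transform is on $\mathbb{Z}$ with frequencies in $[0,1]$, so Parseval and the convolution identity take the forms written above).
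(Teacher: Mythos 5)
Your argument is correct and is essentially the same as the paper's: both rewrite $|C||D|=\sum_n g(n)\,\#\{(c,d):c+d=n\}$ as an integral of $\widehat{g}$ against $\widehat{1_C}\widehat{1_D}$ via orthogonality/Parseval, pull out $\sup_\theta|\widehat{g}(\theta)|$, and finish with Cauchy--Schwarz plus Parseval for $1_C$ and $1_D$. The only cosmetic difference is that you phrase the middle step through the representation function $h$ rather than expanding $1_{x=c+d}$ as $\int e(\theta(x-c-d))\,d\theta$ directly.
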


\begin{proof}
	From the given condition we have
 	\begin{align*}
      |C||D| &= \sum_{-Q\leq x\leq 2Q}\sum_{c\in C}\sum_{d\in D}f(x)1_{x=c+d}\\
      &= \sum_{-Q\leq x\leq 2Q}\sum_{c\in C}\sum_{d\in D}g(x)\int e(\theta (x-c-d))\,d\theta\\  
      &= \int \overline{\widehat{g}(\theta)}\widehat{1_C}(\theta)\widehat{1_D}(\theta)\,d\theta\\
      &\leq \sup_{0\leq \theta\leq 1}|\widehat{g}(\theta)| \cdot \int |\widehat{1_C}(\theta)| |\widehat{1_D}(\theta)|\,d\theta
 	\end{align*}
 	By using Cauchy-Schwarz inequality and Parseval's identity we can bound this further by
 	$$\sup_{0\leq \theta\leq 1}|\widehat{g}(\theta)| \cdot \|\widehat{1_C}\|_{L^2([0,1])}\|\widehat{1_D}\|_{L^2([0,1])} = \sup_{0\leq \theta\leq 1}|\widehat{g}(\theta)| \cdot |C|^{1/2}|D|^{1/2},$$
	and this gives the inequality from the statement.
\end{proof}

We now have all the tools needed and are in the position to start the proof.

\begin{proof}[Proof of the upper bound in Theorem~\ref{T:main}]
Set $L=Q/\log N$, as in Proposition~\ref{P:trichotomy}, and $\delta=\epsilon^4$. As we mentioned before, our aim is to prove that with high probability $\omega(\Gamma_f)<k=(2+\epsilon)\log N$. Obviously, this is equivalent to proving that
\begin{equation}\label{E:main}
\mathbb{P}\left(\bigcup_{A\colon |A|=k}\{f(A\widehat{+}A)=1\}\right) =  o(1). 
\end{equation}
First of all, we apply Proposition~\ref{P:trichotomy} which shows that every set $A$ of interest is of at least one of the types 1, 2, and 3. Denote these families of sets by $\mathcal{F}_1$, $\mathcal{F}_2$, $\mathcal{F}_3$, respectively.

First we deal with type 1 sets which is quite easy. Indeed, using the notation introduced in this section, we have
\begin{align}
\mathbb{P}\left(\bigcup_{A\in\mathcal{F}_1}\{f(A\widehat{+}A)=1\}\right)&\leq \sum_{A\in\mathcal{F}_1}\mathbb{P}\left(f(A\widehat{+}A)=1\right)\leq \sum_{m\geq k-1}\sum_{A\in S_k^m\cap \mathcal{F}_1}\mathbb{P}\left(f(A\widehat{+}A)=1\right)\nonumber\\
&\leq \sum_{m\geq k-1}|S_k^m|2^{-(1-\delta)m} \leq \sum_{m\geq k-1}2^{-(\epsilon^3-\epsilon^4)m}=o(1).\label{E:o1F1}
\end{align}
Here we used Theorem~\ref{T:Skm} to obtain the last inequality. 

Let $A$ now be a type 2 set. After possibly discarding some extra elements, we see that there is a subset $A'\subset A$ of size $k'=(1-\delta)(2+\epsilon)\log N > (2+\epsilon^2)\log N$ such that $A'\widehat{+}A'$ is disjoint from $[1, Q]$. This implies that
\begin{align}
\mathbb{P}\left(\bigcup_{A\in\mathcal{F}_2}\{f(A\widehat{+}A)=1\}\right)&\leq \mathbb{P}\left(\bigcup_{|A'|=k'}\{f(A'\widehat{+}A')=1\}\right)\leq \sum_{|A'|=k'}\mathbb{P}\left(f(A'\widehat{+}A')=1\right)\nonumber\\
&= \sum_{m\geq k'-1}\sum_{A\in S_{k'}^m}\mathbb{P}\left(f(A\widehat{+}A)=1\right)= \sum_{m\geq k'-1}|S_{k'}^m|\cdot 2^{-m}\nonumber\\
&\leq \sum_{m\geq k-1}2^{-\epsilon^6m}=o(1).\label{E:o1F2}
\end{align}

Finally, we deal with type 3 sets. For these the situation is somewhat trickier since one can show that the expected number of type 3 sets that span a clique tends to infinity. Thus, the union bound (i.e.\ first moment method) used for type 1 and 2 sets wouldn't work.

Let $A$ be a type 3 set. By the definition there are subsets $C,D\subset A$, each of size $\frac{\delta^5}{6L^4}\log N$ such that $C+D\subset [-Q,2Q]$ and both $C$ and $D$ are contained in intervals of size $\log N$. Because of the last property, after possible translations, we get two subsets of $\mathbb{Z}$ whose sumset is contained in $[-Q,2Q]$. We will abuse the notation and denote these sets by $C$ and $D$ as well. By Proposition~\ref{P:existsbigFourier}, we get
$$\sup_{0\leq \theta\leq 1}|\widehat{g}(\theta)|\geq \textstyle{\frac{\delta^5}{6L^4}\log N}.$$
However, by Proposition~\ref{P:boundsforbigFourier}, we know that the probability of this is quite small. Indeed, taking $l=\frac{\delta^5}{6L^4}\log N$, we have
$$\mathbb{P}\left(\sup_{0\leq \theta\leq 1}|\widehat{g}(\theta)| \geq \textstyle{\frac{\delta^5}{6L^4}\log N}\right) \leq L^{25} / (\log N)^{1-o(1)}.$$

From this we can finally conclude that 
\begin{equation}\label{E:o1F3}
\mathbb{P}\left(\bigcup_{A\in\mathcal{F}_3}\{f(A\widehat{+}A)=1\}\right)\leq  L^{25} / (\log N)^{1-o(1)} = o(1),
\end{equation}
for $L=c\log\log N$. Combining \eqref{E:o1F1}, \eqref{E:o1F2}, and \eqref{E:o1F3} we get \eqref{E:main}.

\end{proof}

\section{Proof of the lower bound}\label{S:lower}

In this section we prove the lower bound from Theorem~\ref{T:main}. First of all, we would like to point out that the same bound also holds for random Cayley sum graphs. Although we are not aware of a proof of this fact in the literature, it is certainly a trivial consequence of the much more difficult result about the chromatic number by Green \cite{greenchromatic}. In this section we will give a proof for our model which works also, with few easy modifications, for the random Cayley sum graph model.  

We fix $k=(2-\epsilon)\log N$. Let $\mathcal{U}$ be the family of all sets $A\subset [N/4,N/2]$  of size $k$ such that $|A\widehat{+}A|={k\choose 2}$. Let $R$ be the number of sets $A\in \mathcal{U}$ such that $f(A\widehat{+}A)=1$. We will prove that with high probability $R\geq 1$ which, of course, proves that with high probability there is a clique of size $k$.

First we prove that almost all subsets of $[N/4,N/2]$ of size $k$ belong to $\mathcal{U}$.

\begin{lemma}
  $|\mathcal{U}|=(1+o(1)){{N/4}\choose k}$.
\end{lemma}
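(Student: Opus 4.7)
The plan is to run a standard first-moment / counting argument. Since $A \subset [N/4, N/2]$, any restricted sum $a_1+a_2$ with $a_1 \neq a_2$ from $A$ lies in $[N/2, N-1]$ over $\mathbb{Z}$, so no wrap-around occurs modulo $N$ and I can work entirely with integers. The cardinality of $[N/4,N/2] \cap \mathbb{Z}$ is $N/4 + O(1)$, and the total number of $k$-subsets is $\binom{N/4+O(1)}{k} = (1+o(1))\binom{N/4}{k}$. Hence it suffices to bound the number of \emph{bad} subsets, meaning those $A$ with $|A\widehat{+}A| < \binom{k}{2}$.

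The key observation is that $|A\widehat{+}A| < \binom{k}{2}$ if and only if $A$ contains a nontrivial additive quadruple: four \emph{distinct} elements $a_1, a_2, a_3, a_4 \in A$ with $a_1 + a_2 = a_3 + a_4$ but $\{a_1, a_2\} \neq \{a_3, a_4\}$. (If the two pairs shared an element, subtracting forces the other two to coincide as well, contradicting distinctness of the pairs.) So I would bound the number of bad $A$ by summing, over all ordered quadruples of distinct elements in $[N/4,N/2]$ satisfying the equation, the number of $k$-sets containing that quadruple.

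For the count of quadruples: the three elements $a_1, a_2, a_3$ can be chosen freely in $(N/4 + O(1))^3 = O(N^3)$ ways, and then $a_4 = a_1 + a_2 - a_3$ is determined, giving at most $O(N^3)$ valid ordered quadruples. For each such quadruple, the number of $k$-subsets of $[N/4, N/2]$ containing all four is $\binom{N/4 + O(1) - 4}{k - 4}$. Therefore
\begin{equation*}
|\mathcal{U}^{c}| \;\leq\; O(N^3) \cdot \binom{N/4 + O(1) - 4}{k - 4}.
\end{equation*}

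Comparing with $\binom{N/4}{k}$, the ratio $\binom{N/4-4}{k-4}/\binom{N/4}{k}$ is $\Theta(k^4/N^4)$, so the bad count is at most $O(N^3 \cdot k^4/N^4) \cdot \binom{N/4}{k} = O(k^4/N)\binom{N/4}{k} = O((\log N)^4 / N) \cdot \binom{N/4}{k}$, which is $o(1) \binom{N/4}{k}$. Combined with $\binom{N/4 + O(1)}{k} = (1+o(1))\binom{N/4}{k}$, this gives $|\mathcal{U}| = (1+o(1))\binom{N/4}{k}$ as claimed. There is no real obstacle here; the only thing to handle with care is the verification that for $A \subset [N/4, N/2]$ the two senses of ``$\widehat{+}$'' (over $\mathbb{Z}$ and over $\mathbb{Z}/N\mathbb{Z}$) coincide, so that the collision condition is genuinely an equation in $\mathbb{Z}$ rather than a congruence.
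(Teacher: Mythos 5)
Your argument is correct and is essentially the paper's proof: both reduce the statement to counting additive quadruples of distinct elements in $[N/4,N/2]$ (of which there are $O(N^3)$) and observe that each lies in a given $k$-set with probability (or proportion) $O(k^4/N^4)$, giving an $O(k^4/N)=o(1)$ fraction of bad sets. The only difference is cosmetic — you count bad subsets directly while the paper phrases the same first-moment bound via a random $k$-subset and the union bound — and your extra remark about no wrap-around modulo $N$ is a harmless added check.
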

\begin{proof}
  Let $A\subset [N/4, N/2]$ be a random subset of size $k$. There are at most $(N/4)^3$ quadruples of different elements $x_1,x_2,x_3,x_4\in [N/4, N/2]$ such that $x_1+x_2=x_3+x_4$ and for each of these the probability that it is included in $A$ is $O(k^4/N^4)$. By the union bound, probability that $A$ contains at least one such quadruple is $O(k^4/N)=o(1)$ which is what we had to prove.
\end{proof}

By the previous lemma we have
\begin{equation}\label{E:eR}
\mathbb{E}R\geq \textstyle{\frac12}{{N/4}\choose k}2^{-{k\choose 2}} \geq 2^{k\log N - k^2/2 + o(k^2)}.
\end{equation}

We now focus on bounding the variance of $R$. For each $l\leq {k\choose 2}$, let $\mathcal{V}_l$ be the family of all pairs of sets $A,B\in \mathcal{V}_l$ such that $|(A\widehat{+}A)\cap (B\widehat{+}B)|=l$. We have
\begin{align}
\mathrm{Var}R &= \sum_{l=0}^{{k\choose 2}} \sum_{(A,B)\in\mathcal{V}_l}\mathrm{cov}(1_{f(A\widehat{+}A)=1}, 1_{f(B\widehat{+}B)=1})\nonumber\\
&\leq \sum_{l=1}^{k\choose 2} |\mathcal{V}_l|\mathbb{P}(f(A\widehat{+}A)=1, f(B\widehat{+}B)=1)\nonumber\\
&= 2^{-k(k-1)}\sum_{l=1}^{k\choose 2} |\mathcal{V}_l|2^l.\label{E:varRpoc}
\end{align}
Here the last equality holds because of our choice of family $\mathcal{U}$ which ensured that for every $A\in\mathcal{U}$ the set $A\widehat{+}A$ is disjoint from $[1, Q]$.

The next step is to bound $\mathcal{V}_l$. For each fixed $A\in \mathcal{U}$ we are going to bound the number of possible $B\in\mathcal{U}$ such that $|(A\widehat{+}A) \cap (B\widehat{+}B)|=l$.

Consider one such set. Let $s$ be the unique integer such that
$${s\choose 2} < l \leq {{s+1}\choose 2}.$$
We prove that we can order the elements of $B$ and find a subset $B'\subset B$ of $s$ elements such that for each $b'\in B'$ there is a smaller $b\in B$ such that $b+b'\in A\widehat{+}A$. We can do this in the following way.

Form a graph with vertex set $B$ and edges joining $b$ and $b'$ if $b+b'\in A\widehat{+}A$. For each of the connected components $B_1,\dots, B_r$ of this graph, choose an element $b_i\in B_i$. Now take an arbitrary order of elements of $B$ which satisfies the sole condition that if $b$ and $b'$ come from the same component $B_i$, and distance from $b$ to $b_i$ is less then distance from $b'$ to $b_i$, then $b<b'$. One can easily construct this by ordering component by component. Obviously, $b_1,\dots, b_r$ are the only elements in this order that don't satisfy the required condition. Given that our graph has $k$ vertices and at least $l$ edges, the problem boils down to finding the maximal possible number of connected components in such a graph. It is intuitively clear (and one can easily show) that a maximizer for this optimization problem is a graph with $k-s$ connected components, all but one of which are singletons, and the remaining one contains $s+1$ vertices and $l$ edges. In this graph, we can take $B'$ to be equal to the nontrivial component, apart from the smallest element in it. 

Using this observation, we make an enumerative argument as follows -- we can choose set $A$ in at most $(N/4)^k$ ways, ordering in $k!$ ways, positions in the order that would be occupied by elements from $B'$ in $k^{s}$ ways, elements on positions outside $B'$ in $(N/4)^{k-s}$ and elements from $B'$ in $(k^3)^{s}$ ways (each element from $B'$ is uniquely determined by choosing one of at most $k^2$ possible sums from $A\widehat{+}A$ and one of at most $k$ predecessors from $B$). Putting all this together we get
$$|\mathcal{V}_l| \leq (N/4)^kk!k^{s}(N/4)^{k-s}(k^3)^{s}\leq 2^{(2k-s)\log N + o(k^2) }.$$
Since $l\leq {k\choose 2}$ we have $s\leq k-1$ and so
$$l-s\log N \leq {{s+1}\choose 2} - s\log N \leq (\textstyle{\frac12} - \textstyle{\frac{1}{2-\epsilon}})k^2 + O(k),$$
and these two bounds together with \eqref{E:varRpoc} give
\begin{equation}\label{E:varR}
\mathrm{Var}R \leq 2^{2k\log N - (\frac12 + \frac{1}{2-\epsilon})k^2 + o(k^2)}. 
\end{equation}
By Chebyshev's inequality, \eqref{E:eR} and \eqref{E:varR} we now have 
\begin{equation}\label{E:secondmomentlowerbound}
  \mathbb{P}(R=0) \leq \frac{\mathrm{Var} R}{(\mathbb{E}R)^2} = o(1),
\end{equation}
which is what we wanted to prove.

\section{Proof of Theorem~\ref{T:cliqueborelcantelli}}\label{S:proofofcliqueborelcantelli}

Theorem~\ref{T:cliqueborelcantelli} is a straightforward consequence of Theorem~\ref{T:main} and the following strengthening of Borel-Cantelli lemma. For the proof of it, see e.g.\ the book by Chung \cite[page 83]{chung}

\begin{proposition}[Generalized second Borel-Cantelli lemma]\label{P:secondborelcantelli}
Let $(A_n)_{n\geq 1}$ be a sequence of independent events such that $\sum_n\mathbb{P}(A_n) = \infty$. Then $\mathbb{P}(A_n \text{ happens for infinitely many } n)=1$ and
$$\lim_{m\to\infty}\frac{\sum_{n=1}^{m}1_{A_n}}{\sum_{n=1}^{m}\mathbb{P}(A_n)} = 1\quad\text{almost surely.}$$
\end{proposition}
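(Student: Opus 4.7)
My plan is to prove the ratio statement by the second moment method plus a standard subsequence/interpolation trick, and to deduce the ``infinitely often'' claim as a free corollary. Write $S_m=\sum_{n=1}^m 1_{A_n}$ and $T_m=\sum_{n=1}^m\mathbb{P}(A_n)$; the hypothesis says $T_m\to\infty$. Independence gives $\mathrm{Var}(S_m)=\sum_{n=1}^m\mathbb{P}(A_n)(1-\mathbb{P}(A_n))\leq T_m$, so Chebyshev's inequality yields, for every fixed $\epsilon>0$,
$$\mathbb{P}\bigl(|S_m-T_m|>\epsilon T_m\bigr)\leq\frac{\mathrm{Var}(S_m)}{\epsilon^2 T_m^2}\leq\frac{1}{\epsilon^2 T_m},$$
which already delivers convergence in probability of $S_m/T_m$ to $1$.

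To upgrade to almost sure convergence I would define $m_k$ to be the least index with $T_{m_k}\geq k^2$; because the increments $T_m-T_{m-1}=\mathbb{P}(A_m)$ lie in $[0,1]$, the minimality gives $k^2\leq T_{m_k}<k^2+1$. Along this subsequence the Chebyshev bound above is dominated by $1/(\epsilon^2 k^2)$, which is summable in $k$, so the first (easy) Borel-Cantelli lemma gives $S_{m_k}/T_{m_k}\to 1$ almost surely. For a general $m$ with $m_k\leq m<m_{k+1}$, monotonicity of $S_m$ and $T_m$ in $m$ yields
$$\frac{S_{m_k}}{T_{m_{k+1}}}\leq\frac{S_m}{T_m}\leq\frac{S_{m_{k+1}}}{T_{m_k}},$$
and since $T_{m_k}/T_{m_{k+1}}\geq k^2/((k+1)^2+1)\to 1$, both outer ratios tend almost surely to $1$; hence so does $S_m/T_m$.

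The first assertion then comes for free: since $T_m\to\infty$ and $S_m/T_m\to 1$ almost surely, we have $S_m\to\infty$ almost surely, i.e.\ $A_n$ occurs infinitely often with probability one. The only genuinely delicate point is the choice of subsequence: it must be sparse enough to make the Chebyshev estimates summable (forcing $T_{m_k}$ to grow at least like $k^2$) yet dense enough that $T_{m_{k+1}}/T_{m_k}\to 1$ so that the interpolation loses nothing in the limit. Choosing $T_{m_k}$ of order $k^2$ accomplishes both simultaneously, and balancing these two requirements is really the substantive content of the argument.
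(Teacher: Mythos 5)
Your proof is correct; the paper does not prove this proposition itself but cites Chung, and your argument (Chebyshev via the additivity of variance for independent indicators, almost sure convergence along a subsequence with $T_{m_k}\asymp k^2$, then sandwiching a general $m$ between consecutive subsequence indices) is exactly the standard textbook proof of this result. The only cosmetic omission is that the Borel--Cantelli step should be applied for a countable sequence of $\epsilon$'s (say $\epsilon=1/j$) before intersecting the resulting almost sure events, and it is worth noting that your argument in fact only uses pairwise independence.
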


\begin{proof}[Proof of Theorem~\ref{T:cliqueborelcantelli}]
Notice that
 \begin{align*}
    \mathbb{P}(\omega(\Gamma_f^{(N)})\geq Q/2) &\geq \mathbb{P}(f([1, Q])=1) = \mathbb{P}\left(f(p)=1, \text{for all primes }p\in [1, Q]\right)\\
    &= 2^{-\pi(Q)} \geq 1/N,
\end{align*}
where the last inequality follows from the prime number theorem and the assumption $Q\leq \log N\log\log N$. The first part of the claim from the proposition follows by Borel-Cantelli lemma. To prove the second part, for each prime $N$ let 
$$A_N = \{\omega(\Gamma_f^{(N)}) <(2-\epsilon)\log N \text{ or } \omega(\Gamma_f^{(N)}) > (2+\epsilon)\log N)\}.$$
Since we have proven that $\mathbb{P}(A_N)=o(1)$, we obviously have that
$$\sum_{N\leq M, N \text{prime}}\mathbb{P}(A_N) = o(\pi(M)),$$
and the conclusion follows from Proposition~\ref{P:secondborelcantelli}.
\end{proof}

\textsl{Acknowledgements.} We would like to thank Ben Green for suggesting this project, Freddie Manners for helpful discussions, and Kannan Soundararajan for giving a reference for some of the results contained in Section \ref{S:independence}. We would also like to thank Sean Eberhard for the idea of ordering the elements of $B$ when bounding $\mathrm{Var} R$ in Section~\ref{S:lower}.

\bibliography{paley_model}{}
\bibliographystyle{alpha}

\end{document}